\numberwithin{equation}{section}
\newtheorem{proposition}{Proposition}
\numberwithin{proposition}{section}
\newtheorem{thm}{Theorem}
\numberwithin{thm}{section}
\newtheorem{definition}{Definition}
\numberwithin{definition}{section}
\newtheorem{rem}{Remark}
\numberwithin{rem}{section}
\newtheorem{lem}{Lemma}
\numberwithin{lem}{section}
\numberwithin{cor}{section}
\newcommand{\RR}{\mathrm{I\!R\!}}
\newcommand{\R}{{\mathbb R}}
\newcommand{\mb}[1]{{\boldsymbol #1}}
\begin{document}
\title{On action rate admissibility criteria}

\author{H.~Gimperlein\thanks{Engineering Mathematics, University of
    Innsbruck, Innsbruck, Austria} \and M.~Grinfeld\thanks{Department
    of Mathematics and Statistics, University of Strathclyde, Glasgow,
    G1 1XH, UK} \and R.~J.~Knops\thanks{The Maxwell Institute of
    Mathematical Sciences and School of Mathematical and Computing
    Sciences, Heriot-Watt University, Edinburgh, EH14 4AS, Scotland,
    UK} \and M.~Slemrod\thanks{Department of Mathematics, University
    of Wisconsin, Madison, WI 53706, USA}}

\date{}

\maketitle
\begin{abstract}
  \noindent We formulate new admissibility criteria for initial value
  problems motivated by the least action principle. These are applied
  to a two-dimensional Riemann initial value problem for the
  isentropic compressible Euler fluid flow. It is shown that the
  criterion prefers the 2-shock solution to solutions obtained by
  convex integration by Chiodaroli and Kreml or to the hybrid
  solutions recently constructed by Markfelder and Pellhammer.

\end{abstract}
\noindent \textit{MSC}: 35L65 (primary), 35C06, 35D30, 35LQ35, 76N10 (secondary).

\noindent \textit{Keywords}: Weak Solution; Riemann Problem; Least Action
Principle; Convex Integration ; Admissible Solution; Entropy Rate.

\section{Introduction}\label{intr} 

The paper \cite{ggks25} introduces a version of the least action
principle as a selection criterion in initial value problems
possessing non-unique solutions. Interest arises from examples that
use convex integration techniques to obtain families of non-unique
{\it wild} solutions to the Euler system (De Lellis and
Sz\'{e}kelyhidi \cite{ds09,ds10}), and non-unique solutions to the
Navier-Stokes equations (Buckmaster and Vicol \cite{bv19}). Convex
integration is also employed by Chiodaroli and Kreml \cite{ck14} to
investigate the two-dimensional Riemann initial value problem for the
isentropic compressible Euler system. They construct a large set of
entropic wild solutions to the Riemann problem. Consequently, in these
problems it is desirable to look for an admissibility criterion that
distinguishes a particular solution from the many possible. Because of
its mathematical simplicity, we are motivated to seek an
admissibility criterion that selects the classical 2-shock solution.

%In the present paper we continue the discussion of admissibility criteria started in \cite{ggks25} in view of the recent results of Markfelder and Pellhammer \cite{mp25}.  
Other admissibility criteria in
fluid dynamics include the principles of minimum potential energy or
entropy, various rate of change criteria, and that of least action
(LAAP) developed in \cite{ggks25}. The latter principle applied to the
initial value problem considered in \cite{ck14} selects the 2-shock
solution provided the solutions obtained by convex integration are the
only other ones admitted for comparison.  On the other hand,
Chiodaroli and Kreml \cite{ck14} show that there exist Riemann data
for which there are wild solutions that possess an entropy rate lower
than the 2-shock solution which therefore is not selected by
Dafermos's entropy rate criterion \cite{d73}. Nevertheless, LAAP
selects the 2-shock in preference to wild solutions in particular
cases.

LAAP, however, does not select the 2-shock solution when the set of
comparison solutions is enlarged to include other weak solutions in
addition to the 2-shock and convex integration solutions. Markfelder
and Pellhammer \cite{mp25} recently constructed {\it hybrid}
solutions to the same Riemann initial value problem for which, subject
to prescribed initial data, LAAP does not identify the 2-shock
solutions as being uniquely admissible. In fact, this result indicates
that LAAP does not provide an admissible solution to the Riemann
problem. %Therefore it is of interest to look for an alternative to LAAP that could possibly select the 2-shock solution. 
Accordingly, an alternative to LAAP must be sought when the 2-shock solution in this Riemann problem is preferred.

The main purpose of this paper therefore is to formulate a new
admissibility criterion, called LAAP$_{0}$, which for the Riemann
problem considered in \cite{ck14} selects the 2-shock solution in
comparison to the convex integration and hybrid solutions. LAAP$_{0}$,
while still motivated by the least action principle, involves a rate
of change of action. Both the Dafermos entropy rate criterion and
LAAP$_0$ are local in time. The solutions constructed in \cite{mp25}
are excluded as they rely upon the global nature of LAAP.

Section~2 collects basic definitions, states the least action
admissibility principle (LAAP) derived in \cite{ggks25} and formulates
the new least action admissibility principle (LAAP$_{0}$) and two
closely related rate criteria. Section~3, after a brief description of
convex integration, introduces the two-dimensional Riemann initial
value problem for the barotropic and isentropic compressible Euler
equations of gas dynamics and summarises further material from
\cite{ck14} including the construction of sub-solutions. The 2-shock
solution is also described. Justification for the introduction of
LAAP$_{0}$ is provided in Section~4, while Section~5 defines the
relevant notion of action and applies LAAP$_0$ to the Riemann
problem treated in \cite{ck14} to establish the admissibility of the 2-shock solution.

{Normally accepted notation is introduced without comment.}

\section{Least action admissibility criteria}\label{laap}
Let $S$ be a set of solutions $\mb{u}$ of the Cauchy problem for
the evolutionary system specified by
\begin{equation}
\label{evol}
\frac{d\mb{u}}{dt}=F(\mb{u}),\qquad \mb{u}(t_{0})=\mb{u}_{0},\qquad
t\in (t_{0},\,T],
\end{equation}
where $F(\mb{u})$ is a given operator and $[t_{0},\,T]$ is the maximal
time interval of existence of solutions belonging to $S$. Global
existence is not assumed so that possibly $T<\infty$. Later, the system
\eqref{evol} is specialised to be the Euler-Lagrange equation derived from a
least action principle.

Suppose that $S$ is not a singleton, which implies that \eqref{evol} has
non-unique solutions. The objective is to formulate admissibility
criteria stated in terms of the action whose strict forms select unique elements of $S$. The definition of the action to be
used is postponed to Section~\ref{appl}. For present purposes, it is
sufficient to assume that for each solution $\mb{u}\in S$, the action
$A(\mb{u})(t_{0},\,t)$ %is a non-decreasing function of time $t$ that
vanishes at $t=t_0$ for all $\mb{u}\in S$.

To start, it is convenient to recall the Least Action Admissibility
Principle (LAAP) introduced in \cite{ggks25} and applied to the
Riemann initial value problem for compressible Euler systems and to
Dafermos' nonlinear oscillator \cite{ggks24}. For fixed final time
$t_{1}$, we have:
\begin{definition}[LAAP]\label{laapdef}
  In a time interval $[t_0 , t_1]$, where $t_1\in \R$ is less than or equal
  to the infimum of the maximal time of existence for solutions in
  $S$, a solution $\mb{u} \in S$ is {\bf LAAP-admissible} if the action
  $A(\mb{u})(t_0 , t_1)$ is not greater than the action of all other
  solutions in $S$.  The solution $\mb{u} \in S$ is strictly LAAP-admissible
  when the inequality is strict.
\end{definition}

In the context of the wild solutions of the Riemann problem of
\cite{ck14}, in a number of cases LAAP identifies the 2-shock solution
as the only strictly LAAP-admissible solution for all $t_1 > t_0$.
But when the set $S$ includes solutions other than the wild solutions
constructed in \cite{ck14} and the 2-shock solution, LAAP, as
explained in Section~\ref{intr}, can be ineffective. Obviously, the
action depends upon the value chosen for $t_{1}$. In fact, as
discussed in the next Section, this is the main feature in the
construction of Markfelder and Pellhammer \cite{mp25}: some choices of
$t_{1}$ allow the one-dimensional 2-shock solution to be
LAAP-admissible, while others do not.

Here we formulate a local in time version of LAAP, called LAAP$_0$,
for which the 2-shock solution is admissible in particular cases of
the Riemann problem treated in \cite{ck14}. It
is equally admissible when the set $S$ is enlarged to include the hybrid solutions of \cite{mp25}. 

To define LAAP$_0$ we need
the following definitions. (Note that in Definition 2.2 and subsequently the
time $t_1(\mb{v})$ depends upon the solution $\mb{v}$, and $t_0$
is such that $[t_0,T]$ is the maximal existence interval.)

%%%%%%%%%%%%%%%%%%%%%% Here Now %%%%%%%%%%%%%%%%%%%%%%%%%%%%%%%%%%%%

\begin{definition}\label{pref}

  (i) Given a class $S$ of solutions, a solution $\mb{u}\in S$ is
  {\it preferred} to a solution $\mb{v}\in S$, $\mb{v}\neq\mb{u}$,
  under LAAP$_{0}$ when there exists a time
  $t_{1}=t_{1}(\mb{v})>t_{0}$ such that
  $A(\mb{u})(t_{0},\,t)\le A(\mb{v})(t_{0},\,t)$ for all
  $t\in (t_{0},\,t_{1})$. When, in addition,
  $A(\mb{u})(t_{0},\,t)<A(\mb{v})(t_{0},\,t)$ for some
  $t\in (t_{0},\,t_{1})$, then $\mb{u}$ is {\it strictly preferred}
  to $\mb{v}$.

\medskip 

(ii) A solution $\mb{u}\in S$ is {\it (strictly) LAAP$_{0}$
  admissible in $S$} when for every
$\mb{v}\in S,\, \mb{v}\neq \mb{u},$ $\mb{u}$ is (strictly) preferred
under LAAP$_{0}$.
\end{definition}

\begin{rem}
  In applications to weak solutions to systems of conservation laws
    the set $S$ is always to be included in the set of entropic weak
    solutions with the same initial data as in \cite{ggks25}. A unique
    entropic weak solution is trivially LAAP$_{0}$ admissible as the
    comparison set $S$ becomes a singleton.
 \end{rem}

 When $t\rightarrow A(\mb{u})(t_{0},\,t)$ is differentiable to sufficient order from the
 right at $t=t_{0}$, we can let $t\rightarrow t_{0}^+$ and avoid
 dependence upon $t_{1}(\mb{v})$ by defining two closely related
 rate criteria.  We have:

\begin{definition}\label{rpref}

(i) $\mb{u}\in S$ satisfies the {\it Action Rate Admissibility Criterion (ARAC)} when for all $\mb{v}\in S$, $\mb{v}\neq \mb{u}$, 
\begin{equation}
\label{derraac}
\frac{d}{dt}A(\mb{u})(t_{0},\,t)|_{t=t^{+}_{0}}\le \frac{d}{dt} A(\mb{v})(t_{0},\,t)|_{t=t^{+}_{0}}.
\end{equation}

\medskip

(ii) $\mb{u}\in S $ satisfies the {\it Strict Action Rate Admissibility criterion (sARAC)}  when for all $\mb{v}\in S$, $\mb{v}\neq \mb{u}$, there exists $k \in \mathbb{N}$ with $\frac{d^r}{dt^r}A(\mb{u})(t_{0},\,t)|_{t=t^{+}_{0}}= \frac{d^r}{dt^r} A(\mb{v})(t_{0},\,t)|_{t=t^{+}_{0}}$ for $r=0,\dots,k-1$ and
\begin{equation}
\label{araac}
\frac{d^k}{dt^k}A(\mb{u})(t_{0},\,t)|_{t=t^{+}_{0}}<\frac{d^k}{dt^k}A(\mb{v})(t_{0},\,t)|_{t=t^{+}_{0}}.
\end{equation}
\end{definition}

\begin{rem}\label{dafenr} 
  1. The action rate admissibility criteria of the previous
  definitions {(when $k=1$)} are identical in form to Dafermos's entropy rate
  admissibility criterion \cite{d73}  on replacing action by
  entropy. For barotropic fluids considered in this paper entropy is
  just the  energy.  When $k>1$ the strict action rate criterion  can be analogously carried over to define a $k$-th order entropy rate criterion.

  \medskip

  2. For a class of solutions more general than considered
  here, a definition of entropy rate is presented in \cite[Sects
  1.2,1.3]{f14} when the time derivative only exists almost
  everywhere. The corresponding general definition of the action rate
  is obtained by replacing the energy $E$ by the action $A$ in the
  discussion of \cite{f14}.
\end{rem}

Since by assumption
$A(\mb{u})(t_{0},\,t_{0})=A(\mb{v})(t_{0},\,t_{0})=0$, the required
relationships easily follow from Definition \ref{pref} and
Definition \ref{rpref}. We obtain:
\begin{proposition}\label{prefrpref}
\begin{enumerate}
\item A LAAP$_{0}$-admissible solution is ARAC-admissible.
\item An sARAC-admissible solution is strictly LAAP$_{0}$-admissible.
\end{enumerate}  
\end{proposition}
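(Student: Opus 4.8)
The plan is to reduce both implications to a local sign analysis of the single difference
\[
g(t) := A(\mb{v})(t_{0},t) - A(\mb{u})(t_{0},t),
\]
for a fixed comparison solution $\mb{v}\neq\mb{u}$, exploiting the standing normalisation $A(\mb{u})(t_{0},t_{0}) = A(\mb{v})(t_{0},t_{0}) = 0$, which gives $g(t_{0}) = 0$. Since both Definition~\ref{pref} and Definition~\ref{rpref} quantify over $\mb{v}$ one solution at a time, I would argue for each fixed $\mb{v}$ separately, so that no uniformity in $\mb{v}$ is needed; the whole content is then how the sign of $g$ near $t_{0}^{+}$ passes through the one-sided limit.

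For the first implication I would assume $\mb{u}$ is LAAP$_{0}$-admissible and fix $\mb{v}\neq\mb{u}$. By Definition~\ref{pref}(i) there is a time $t_{1}(\mb{v})>t_{0}$ with $g(t)\ge 0$ for all $t\in(t_{0},t_{1})$. Because $g(t_{0})=0$, the right-hand difference quotient satisfies $g(t)/(t-t_{0})\ge 0$ throughout this interval, and letting $t\to t_{0}^{+}$ yields $g'(t_{0}^{+})\ge 0$, that is,
\[
\frac{d}{dt}A(\mb{u})(t_{0},t)\big|_{t=t_{0}^{+}}\le \frac{d}{dt}A(\mb{v})(t_{0},t)\big|_{t=t_{0}^{+}}.
\]
As $\mb{v}$ was arbitrary, this is precisely the ARAC inequality \eqref{derraac}, giving ARAC-admissibility.

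For the second implication I would assume $\mb{u}$ is sARAC-admissible and again fix $\mb{v}\neq\mb{u}$. Definition~\ref{rpref}(ii) supplies an order $k=k(\mb{v})\in\mathbb{N}$ with $g^{(r)}(t_{0}^{+})=0$ for $r=0,\dots,k-1$ and $g^{(k)}(t_{0}^{+})>0$. Taylor-expanding $g$ from the right about $t_{0}$ to order $k$ gives
\[
g(t)=\frac{g^{(k)}(t_{0}^{+})}{k!}\,(t-t_{0})^{k}+o\big((t-t_{0})^{k}\big),\qquad t\to t_{0}^{+}.
\]
The strictly positive leading coefficient forces $g(t)>0$ on some interval $(t_{0},t_{1}(\mb{v}))$, so that $A(\mb{u})(t_{0},t)<A(\mb{v})(t_{0},t)$ there; this delivers both the non-strict inequality on the whole interval and strict inequality somewhere, which by Definition~\ref{pref}(i) is strict preference of $\mb{u}$ over $\mb{v}$. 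Since $\mb{v}$ is arbitrary, $\mb{u}$ is strictly LAAP$_{0}$-admissible.

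The argument is elementary, and the only point needing care is the passage to one-sided limits: in the first part the existence of $g'(t_{0}^{+})$, and in the second the validity of the right-sided Taylor expansion with a genuine $o$-remainder. Both are guaranteed by the differentiability assumption recorded just before Definition~\ref{rpref}, namely that $t\mapsto A(\mb{u})(t_{0},t)$ is differentiable from the right at $t_{0}$ to sufficient order. I would also stress that only the stated directions hold: ARAC constrains merely the first derivative and so need not recover LAAP$_{0}$, while LAAP$_{0}$ does not record which derivative first separates the two actions and so need not yield sARAC. The proposition thus isolates exactly the one-directional information that survives the limit $t\to t_{0}^{+}$.
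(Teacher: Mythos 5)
Your proof is correct and is exactly the elaboration the paper intends: the paper itself only remarks that the claims ``easily follow'' from the normalisation $A(\mb{u})(t_{0},t_{0})=A(\mb{v})(t_{0},t_{0})=0$ together with Definitions \ref{pref} and \ref{rpref}, and your sign analysis of $g(t)=A(\mb{v})(t_{0},t)-A(\mb{u})(t_{0},t)$ via the one-sided difference quotient (part 1) and the right-sided Taylor expansion (part 2) supplies precisely the missing details, correctly invoking the standing right-differentiability assumption stated before Definition \ref{rpref}.
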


We next apply the principles introduced in {Definitions} \ref{pref} and
\ref{rpref} to the two-dimensional Riemann problem for 
compressible isentropic Euler equation studied in e.g.,
\cite{ck14,ggks25,mp25}.

\section{The Riemann problem for a compressible barotropic Euler system}\label{rivp}

This Section defines the 2-dimensional Riemann problem and the isentropic form to be
analysed. For convenience, several subsections are devoted to
summarising various solutions and their relevant properties required in the later application of LAAP$_{0}$. In particular,
we consider solutions constructed by Chiodaroli and Kreml \cite{ck14} using convex integration techniques, the two main elements of which are first 
briefly reviewed.

\subsection{Convex integration}\label{ci}
The two main elements of the convex integration procedure pertinent to
the present study are the concepts of a {\it sub-solution} and of
{\it corrugation}. For equation \eqref{evol} a sub-solution is the
function $\mb{w}$ that satisfies the inequality
\[
\frac{d\mb{w}}{dt}-F(\mb{w})\le 0.
\]
By corrugation we mean an iterative scheme of superposing on the
sub-solution a sequence of spatio-temporal oscillations at decreasingly small 
scales such that in the limit the combined subsolution and oscillation converge to an appropriately defined weak
(exact) solution to the equation of interest. In Nash's fundamental paper
\cite{n54} on isometric embedding of Riemannian manifolds,
sub-solutions correspond to {\it short embeddings}. Nash produced a
corrugation algorithm which he termed {\it stages}. Gromov
\cite{g73,g86} generalised the technique, while De Lellis
and Sz\'{e}kelyhidi \cite{ds09,ds10} applied the general theory of
convex integration to construct wild solutions to the Euler equations.
The lecture notes by Markfelder \cite{m21} may be consulted for
details. An overview is provided in the survey by De Lellis and
Sz\'{e}kelyhidi \cite{ds19}.

Chiodaroli and Kreml \cite{ck14} apply the convex integration procedure
%procedure to the 2-dimensional Riemann problem. 
%To make it easier to
%understand our application of LAAP$_{0}$ 
to the 2-dimensional Riemann problem for the isentropic compressible Euler system
in the case when it admits a 2-shock solution. We briefly explain their
construction of sub-solutions and of weak solutions and refer to the latter as wild solutions.

\subsection{Isentropic compressible Euler system}\label{rivprob}

In what follows, the independent variables are time $t$ and position
$\mb{x}$ such that $(t,\mb{x})\in [0,\infty)\times \R^2$. Let $\rho>0$
be the unknown mass density, $\mb{v}$ the fluid velocity, and
$p(\rho)$ the constitutively defined pressure. In an obvious notation,
the two dimensional compressible {barotropic} Euler system of gas
dynamics is given by
\begin{eqnarray}
\label{bar1}
\partial_{t}\rho+div_{x}(\rho \mb{v})&=&0,\\
\label{bar2}
 \partial_{t}(\rho
  \mb{v})+div_{x}\left(\rho\mb{v}\otimes\mb{v}\right)+
  \nabla_{x}[p(\rho)]&=&0,
\end{eqnarray}
to which are adjoined the initial conditions
\begin{equation}
\label{baric}
(\rho,\,\mb{v})(0,\,\cdot)=(\rho_{0},\,\mb{v}_{0}). 
\end{equation}
The pair $(\rho,\,\mb{v})$ is a {\it weak solution} to the initial
value problem for the system \eqref{bar1}-\eqref{baric}
when it satisfies this system in the sense of distributions.

{We consider {\it entropic} weak solutions to the
  initial value problem \eqref{bar1}-\eqref{baric}}, that is, weak
solutions that satisfy the energy-entropy condition in the sense of
distributions (see for example \cite{ck14,d73,m21,mp25}):
\begin{equation}
\label{eeineq}
\partial_{t}\left(\rho\varepsilon(\rho) +\rho \frac{|\mb{v}|^{2}}{2}\right)+div_{x}\left[\left(\rho\varepsilon(\rho)+\rho\frac{|\mb{v}|^{2}}{2}\right)\mb{v}\right]\le 0,
\end{equation}
where $\varepsilon$, the specific internal energy, satisfies
$\varepsilon^{\prime}(\rho)=p(\rho)/\rho^{2}.$ Throughout this paper
it is assumed that $p^{\prime}(\rho)>0.$

Chiodaroli and Kreml \cite{ck14} study the {isentropic
  form of the barotropic system} \eqref{bar1} and \eqref{bar2} for
which $p(\rho)=\rho^{\gamma}$, where the constant $\gamma$ satisfies
$1<\gamma\leq 3$. The Riemann initial data is
\begin{equation}
\label{isenic}
(\rho_{0},\,{\mb{v}_{0}})=
\left\{
\begin{array}{lll}
(\rho_{-},\,{\mb{v}_{-}}), & x_{2}<0,&-\infty<x_{1}<\infty, \\
(\rho_{+},\,{\mb{v}_{+}}),& x_{2}>0,& -\infty <x_{1}<\infty,
\end{array}
\right.
\end{equation}
where $\rho_{\pm},\,{\mb{v}_{\pm}}$ are constants.

In \cite[(2.47), Lemma 2.4(5)]{ck14}, Chiodaroli and Kreml show that 
{for any Riemann data satisfying the necessary
  condition 
\begin{equation}
\label{exist2sh}
(v_{-}-v_{+})^{2}\rho_{+}\rho_{-}-(\rho_{+}-\rho_{-})(p(\rho_{+})-p(\rho_{-}))>0,
\end{equation} there exists a unique self-similar solution to the initial value problem, the 2-shock solution, given by 
\[
\left\{
\begin{array}{lllll}
  \rho=\rho_{-}, & v_{b}=v_{{-}},& t>0, & -\infty < x_{1} <\infty,
  & x_{2}< \nu_{-}t, \\
  \rho=\rho_{+}, & v_{b}=v_{{+}},&  t>0,& -\infty <x_{1} <\infty,
  & x_{2}> \nu_{+}t,\\
  \rho=\rho_{m}, &  v_{b}=v_{m},& t>0,& - \infty <x_{1} <\infty,
  & \nu_{-}t < x_{2}< \nu_{+}t.
\end{array}
\right.
\]
} Here $\mb{v}=(0,\,v_{b})$; $\rho_m$ and $v_m$ are found from
\cite[(2.48), (2.49)]{ck14} and the speeds of the shocks $\nu_\pm$ are
determined from the Riemann data by the Rankine-Hugoniot conditions; see
\cite[(2.29), (2.32)]{ck14}.

%%%%%%%%%%%%%%%%%%%%% Here Now %%%%%%%%%%%%%%%%%%%%%%%%%%%%

\subsection{Fan sub-solutions}

The next task is to define the sub-solution of Chiodaroli and Kreml
\cite[Sect.~3]{ck14}. Together with the corrugations of Lemma
\ref{corru}  it is an essential component of obtaining weak
solutions by the convex integration process.

A {\it fan partition} of $\RR^{2}\times (0,\,\infty)$ consists of the open sets
\begin{eqnarray}
\label{p1}
P_{-}&=& \left\{(x_{1},\, x_{2},\,t):t>0,\quad x_{2}<\nu_{-}t,\quad -\infty <x_{1} <\infty\right\},\\
\label{p2}
P_{1}&=& \left\{(x_{1},\,x_{2},\,t): t>0,\, \nu_{-}t<x_{2} < \nu_{+}t,\,-\infty<x_{1} <\infty\right\},\\
\label{p3}
P_{+}&=& \left\{(x_{1},\,x_{2},\,t): t>0,\, \nu_{+}t<x_{2},\, -\infty <x_{1}<  \infty\right\},
\end{eqnarray}
{where $\nu_{-} <\nu_{+}$ are arbitrary real numbers that correspond to the shock speeds.}% and which depend upon $\bar{\rho}$. They are  written $(\nu_{-}(\bar{\rho}),\, \nu_{+}(\bar{\rho}))$ to emphasize this dependence.

{We denote by $S_{0}^{2\times 2}$ the set of $2\times 2$ symmetric matrices with zero trace.}

\begin {definition}[Fan sub-solution]\label{fan}
A {\it fan sub-solution} to \eqref{bar1} and \eqref{bar2} subject to initial data \eqref{isenic} is the triple $(\bar{\rho},\,\bar{\mb{v}},\,\bar{\mb{u}}):\RR^{2}\times (0,\,\infty)\rightarrow (\RR^{+},\, \RR^{2},\,S^{2\times 2}_{0})$ of piecewise constant functions subject to:
\begin{description}
\item [(i)] The region $\RR^{2}\times (0,\,\infty)$ can be decomposed into  {\it fan partitions} $P_{-},\,P_{1},\,P_{+}$ such that 
\begin{equation}\label{eq310}
(\bar{\rho},\, \bar{\mb{v}},\,\bar{\mb{u}})=(\rho_{-},\,\mb{v}_{-},\,\mb{u}_{-})\textup{I}_{P_{-}}+(\rho_{1},\,\mb{v}_{1},\,\mb{u}_{1})\textup{I}_{P_{1}}+(\rho_{+},\,\mb{v}_{+},\,\mb{u}_{+})\textup{I}_{P_{+}},
\end{equation}
where $\rho_{1},\,\mb{v}_{1},\,\mb{u}_{1}$ are constants with $\rho_{1}>0$ and where $\mb{u}_{-}=\mb{v}_{-}\otimes \mb{v}_{-}-(1/2)|\mb{v}_{-}|^{2} \mathrm{Id}$, with a similar definition for $\mb{u}_{+}$.
\item[(ii)]There exists a positive constant $C$ such that
\begin{equation}
\label{v1ineq}
\mb{v}_{1}\otimes\mb{v}_{1}-\mb{u}_{1}< \frac{C}{2} \mathrm{Id}.
\end{equation}
\item[(iii)] The triple $(\bar{\rho},\,\bar{\mb{v}},\,\bar{\mb{u}})$  is a solution  in the sense of distributions to the system
\begin{eqnarray}
\label{trip1}
\partial_{t}\bar{\rho}+div_{x}(\bar{\rho}\bar{\mb{v}})&=& 0,\\
\label{trip2}
\partial_{t}(\bar{\rho}\bar{\mb{v}})+div_{x}(\bar{\rho}\bar{\mb{u}})+\nabla_{x}\left(p(\bar{\rho})+\frac{1}{2}\left(C\rho_{1}\textup{I}_{P_{1}}+\bar{\rho}|\bar{\mb{v}}|^{2}\textup{I}_{P_{+}\cup P_{-}}\right)\right)&=& 0.
\end{eqnarray}
\end{description}
\end{definition}

It may be easily concluded from condition $(iii)$ that in $P_{-}\cup P_{+}$ the triple $(\bar{\rho},\, \bar{\mb{v}},\,\bar{\mb{u}})$ is a weak solution to \eqref{bar1}, \eqref{bar2} and \eqref{baric}. In $P_{1}$, however, the triple in the sense of distributions satisfies
\begin{equation}
\label{ponetrip}
\partial_{t}(\bar{\rho}\bar{\mb{v}})+div_{x}(\bar{\rho}\bar{\mb{u}})+\nabla_{x}\left(p(\bar{\rho})+\frac{1}{2}C\rho_{1}\right)=0.
\end{equation}

\subsection{Corrugations}

{In their article Chiodaroli and Kreml \cite{ck14}
  prove non-uniqueness of weak solutions to the Riemann initial value
  problem with initial conditions \eqref{isenic}, i.e.~not only is
  there the classical 2-shock solution given in Section \ref{rivprob}, but also an
  infinite number of weak solutions that possess non-trivial $x_1$
  dependence. Their proof, based upon convex integration, consists of two pieces: (i) a base sub-solution and (ii) an
  infinite sequence of corrugations which when combined with (i)
  yields an infinite number of solutions to the initial value
  problem. Here we recall these two concepts within the context of our
  Riemann initial value problem for the isentropic Euler equations. We
  emphasise that the corrugations combined with even a single
  sub-solution yield an infinite number of weak solutions to initial
  value problem with initial conditions \eqref{isenic}.}

The article by Tartar \cite{t79} is crucial for the next lemma
\cite[Lemma 3.2]{ck14} which is here stated without proof. The lemma
not only presents a method for constructing corrugations but also
establishes that the corrugations are non-unique.

\begin{lem}\label{corru}
Let $(\tilde{\mb{v}},\,\tilde{\mb{u}})\in \RR^{2}\times S^{2\times 2}_{0}$ and let $C>0$ be a  positive constant such that
\begin{equation}
\label{vuineq}
\tilde{\mb{v}}\otimes \tilde{\mb{v}}-\tilde{\mb{u}} <\frac{C}{2}\mathrm{Id}.
\end{equation}
For any $\Lambda\subset  \RR^{+}\times \RR^{2}$ there are infinitely many maps $(\mb{v},\,\mb{u})$ with the properties:
\begin{description}
\item[(i)] $\mb{v}$ and $\mb{u}$ vanish identically outside $\Lambda$.
\item[(ii)] 

$div_{x}\mb{v}=0,\quad \partial_{t}\mb{v}+div_{x}\mb{u}=0,$ in the sense of distributions.
\item[(iii)] $(\tilde{\mb{v}}+\mb{v})\otimes (\tilde{\mb{v}}+\mb{v}) -(\tilde{\mb{u}}+\mb{u})=\frac{C}{2}\mathrm{Id}$ \,a.e. in $\Lambda$.
\end{description}
\end{lem}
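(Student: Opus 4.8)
The plan is to recognise Lemma~\ref{corru} as the fundamental corrugation (perturbation) step of the Tartar--De~Lellis--Sz\'ekelyhidi convex integration scheme, and to prove it by reformulating conditions (ii) and (iii) as a differential inclusion and then constructing oscillatory solutions of the linear part whose amplitudes drive the base state onto the pointwise constraint. First I would write the state as $\mb{z}=(\mb{v},\mb{u})$ and observe that (ii) is a constant-coefficient first-order linear system: a plane wave with profile $\mb{z}\,h(\tau t+\mb{k}\cdot\mb{x})$ solves (ii) precisely when the frequency $(\tau,\mb{k})\in\R\times\R^{2}\setminus\{0\}$ satisfies $\mb{k}\cdot\mb{v}=0$ and $\tau\mb{v}+\mb{u}\mb{k}=0$. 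This identifies the \emph{wave cone}
\[
\mathcal{C}=\{(\bar{\mb{v}},\bar{\mb{u}})\in\R^{2}\times S_{0}^{2\times2}:\ \exists\,(\tau,\mb{k})\neq 0,\ \mb{k}\cdot\bar{\mb{v}}=0,\ \tau\bar{\mb{v}}+\bar{\mb{u}}\mb{k}=0\},
\]
the set of directions along which compactly supported oscillations solving (ii) can be built. Simultaneously I would introduce the closed target set $K=\{(\mb{v},\mb{u}):\mb{v}\otimes\mb{v}-\mb{u}=\tfrac{C}{2}\mathrm{Id}\}$ appearing in (iii), and note that the hypothesis $\tilde{\mb{v}}\otimes\tilde{\mb{v}}-\tilde{\mb{u}}<\tfrac{C}{2}\mathrm{Id}$ places the base point $(\tilde{\mb{v}},\tilde{\mb{u}})$ strictly inside the convex region bounded by $K$.

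The heart of the argument, and the step I expect to be the main obstacle, is the geometric lemma that the wave cone $\mathcal{C}$ is large enough to connect strict subsolutions to $K$ with quantitative control. Concretely I would show that for every state with gap $G:=\tfrac{C}{2}\mathrm{Id}-(\mb{v}\otimes\mb{v}-\mb{u})>0$ there is a direction $\bar{\mb{z}}\in\mathcal{C}$ and an amplitude $\sigma>0$, bounded below in terms of $\mathrm{tr}\,G$, such that the whole segment $[(\mb{v},\mb{u})-\sigma\bar{\mb{z}},\,(\mb{v},\mb{u})+\sigma\bar{\mb{z}}]$ stays in the open region while its endpoints move measurably closer to $K$. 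This reduces to an explicit computation in the three-dimensional space $\R^{2}\times S_{0}^{2\times2}$: for each unit vector $\mb{k}$ and each $\mb{a}$ with $\mb{a}\cdot\mb{k}=0$ one exhibits a trace-free symmetric $\bar{\mb{u}}$ with $\bar{\mb{u}}\mb{k}$ parallel to $\mb{a}$, so that $(\mb{a},\bar{\mb{u}})\in\mathcal{C}$, and one verifies that such directions span enough of the gap $G$ to secure the amplitude bound. Establishing the correct lower bound on $\sigma$ --- the precise interplay between the Euler wave cone and the geometry of trace-free symmetric matrices --- is the delicate part on which the applicability of convex integration to this system rests.

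Granting the geometric lemma, the remaining steps are standard machinery. Tartar's localized plane-wave construction supplies, for each admissible direction $\bar{\mb{z}}\in\mathcal{C}$, each length scale and each phase, smooth maps $(\mb{v},\mb{u})$ supported in $\Lambda$, solving (ii), with prescribed amplitude and vanishing mean; these are the corrugations, and the free choice of frequency and phase already exhibits infinitely many of them. I would then either run an iteration, adding at each stage an oscillation selected by the geometric lemma so that the constraint gap contracts by a fixed fraction while the state remains a subsolution supported in $\Lambda$, and pass to an $L^{2}$-limit (in which the linear properties (i)--(ii) are preserved while the gap tends to zero, yielding (iii) almost everywhere), or invoke the Baire category framework of De~Lellis--Sz\'ekelyhidi: complete the subsolutions in a suitable weak topology, observe that the functional measuring the gap is of Baire class one, and conclude that its continuity points form a residual, hence infinite, set on which (iii) holds almost everywhere. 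Either route delivers the required infinitely many maps $(\mb{v},\mb{u})$.
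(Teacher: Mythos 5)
The paper does not prove this lemma at all: it is quoted verbatim from Chiodaroli and Kreml \cite[Lemma 3.2]{ck14} and explicitly ``stated without proof,'' with Tartar \cite{t79} cited as the underlying framework. So there is no in-paper argument to compare against; the relevant comparison is with the proof in \cite{ck14}, which itself follows the De Lellis--Sz\'ekelyhidi convex integration machinery. Your outline is a faithful reconstruction of exactly that argument: the wave-cone computation for the linear system in (ii) is correct (a plane wave in direction $(\bar{\mb{v}},\bar{\mb{u}})$ with frequency $(\tau,\mb{k})$ solves (ii) iff $\mb{k}\cdot\bar{\mb{v}}=0$ and $\tau\bar{\mb{v}}+\bar{\mb{u}}\mb{k}=0$), the identification of \eqref{vuineq} as placing the base state in the interior of the convex hull of the constraint set $K$ is the right reading, and the closing step via localized potentials plus either an $L^2$ iteration or the Baire category argument is how the infinitude of solutions is actually obtained in the literature.

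Two caveats. First, what you present is an outline, not a proof: the ``geometric lemma'' (a $\Lambda$-segment through every strict subsolution whose length is bounded below by the constraint gap) is precisely the content of \cite[Lemma 3.3]{ck14} and of the corresponding lemmas in \cite{ds10}, and you correctly flag it as the crux but do not establish it; similarly, the localization in (i) requires writing the plane waves as derivatives of potentials so that cutting off to $\Lambda$ preserves the differential constraints exactly, which you mention only implicitly via ``Tartar's localized plane-wave construction.'' Second, a small imprecision: $K$ is not the boundary of a convex region in $\RR^{2}\times S_{0}^{2\times 2}$ (it is a circle's worth of states $|\mb{v}|^{2}=C$ lifted to the matrix component); the correct statement is that \eqref{vuineq} characterizes the interior of the convex hull of $K$, and one must separately check that the wave cone is large enough that the lamination convex hull coincides with it. Neither point invalidates the approach, but both would need to be supplied for a complete proof.
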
 

Several observations may  be listed.

\begin{rem}[Remarks on Lemma \ref{corru}]\label{remcorr}
\begin{enumerate}
\item {
Markfelder and Pellhammer [14] exploited in their
discussion that $(\mb{v},\,\mb{u})$ of Lemma \ref{corru} vanish  after a finite time, allowing them to construct a hybrid weak solution.
%Markfelder and Pellhammer \cite{mp25} exploited in their discussion that $(\mb{v},\,\mb{u})$ vanish after a finite time, allowing them to construct a solution to which the results obtained in \cite{ggks25} do not extend. 
Additional comment is provided in Section \ref{appl}.}
%Lemma \ref{corru}(i) states that the maps $(\mb{v},\,\mb{u})$ have compact support $\Lambda$. In particular, these maps are restricted to a compact time interval and vanish smoothly outside this interval. Markfelder and Pellhammer \cite{mp25} exploited the compactness when constructing a counterexample to the claim in \cite{ggks25}. Additional comment is provided in Section \ref{appl}.
\item On appeal to \cite[Sect.~6]{ck14}, the solutions $(\mb{v},\,\mb{u})$ {to the system of Lemma \ref{corru}} may be taken as  periodic in $x_{1}$. For later reference, we let the period be $2L_{3}$. 
\item The reason for  non-uniqueness of the convex integration solutions originates  in  the assertion in Lemma \ref{corru} that there are an infinite number of functions $(\mb{v},\,\mb{u})$ .
\item {The system specified in  Lemma \ref{corru}} is  time reversible as proved on replacing $(t,\,x_{1},\,x_{2})$ by $(-t,\,-x_{1},\,-x_{2})$. %Also, shocks cannot form  in contrast to the usual time irreversible gas dynamics with non-zero pressure.
\item Another important implication of the last Remark is that ``entropy" admissibility criteria may not always serve as appropriate selection principles. This obvious but non-trivial observation is the motivation not only for the present investigation but also for our earlier paper \cite{ggks25}.

\end{enumerate}
\end{rem}

\subsection{Convex integration solutions}
To obtain convex integration solutions to \eqref{bar1}, \eqref{bar2}
and \eqref{isenic}, Chiodaroli and Kreml \cite[Proposition 3.1]{ck14}
add {the maps $(\mb{v},\,\mb{u})$ of Lemma \ref{corru}
  to a fan sub-solution}. Hence, set $\Lambda=P_{1}$ and take
$\tilde{\mb{v}}=\mb{v}_{1},\,\tilde{\mb{u}}=\mb{u}_{1}$ to obtain {from \eqref{trip1}}
\begin{equation}
\label{zero}
{\partial_t \rho_1 + div_{x}(\rho_{1}(\mb{v}+\mb{v}_{1}))=0.}%\textcolor{red}{\partial_{t}(\rho_{1}(\mb{v}+\mb{v}_{1}))=0.}
\end{equation}
Furthermore, from \eqref{ponetrip} upon elimination of the constant $C$ using Lemma \ref{corru}$(iii)$, we have in $P_{1}$: 
\begin{equation}
\label{secpone}
\partial_{t}(\rho_{1}(\mb{v}+\mb{v}_{1}))+div_{x}(\rho_{1}(\mb{u}+\mb{u}_{1}))+{\nabla_{x}p(\rho_{1})+div_{x}\left[\rho_{1}((\mb{v}+\mb{v}_{1})\otimes(\mb{v}+\mb{v}_{1})-(\mb{u}+\mb{u}_{1})\right]}=0.
\end{equation}
%where the trivial term from \eqref{zero} has been retained. 
The second and last terms in \eqref{secpone} cancel and we conclude
that $(\rho_{1},\,\mb{v}+\mb{v}_{1})$ satisfy \eqref{bar1} and
\eqref{bar2} in $P_{1}$\footnote{{The derivation given
    here corrects the one given in our earlier paper
    \cite{ggks25}.}}. An infinite number of weak solutions therefore
has been obtained to the Riemann initial value problem with initial
conditions \eqref{isenic}. We emphasise that this set of weak
solutions has been obtained for {\it each} fan sub-solution.

\subsection{Parameterisation of admissible fan sub-solutions}

Description of admissible fan sub-solutions is dealt with in
\cite[Section 4]{ck14}. The conclusion we require is that the fan
sub-solutions can be parameterised by $\rho_1$ and an additional
parameter $\varepsilon_2 \geq 0$; see \cite[p. 1035]{ck14}. Note that
$\varepsilon_2$ has to satisfy the inequalities
\cite[(4.82),(4.83)]{ck14} for admissibility.

  We can express the constant $C$ of Lemma \ref{corru} in terms of
  these two parameters: the trace of the expression given in Lemma
  \ref{corru} $(iii)$ when $\Lambda =P_{1}$ yields
\begin{equation}
\label{Cdef}
C=|\tilde{\mb{v}}+\mb{v}|^{2} \qquad \text{{in $P_{1}$.}}
\end{equation}

The constant $C$ in \eqref{Cdef}) is then given by \cite[p.~1043]{ck14}:
\begin{equation}\label{Ciden}
C=\beta^2(\rho_1)+\varepsilon_1(\rho_1)+\varepsilon_2,
\end{equation}
where $\beta(\rho_1)$ is the second component of $\mb{v}_1$ and is
given by \cite[(4.48), (4.53)]{ck14} and $\varepsilon_1(\rho_1)$ is
given by \cite[(4.49), (4.50)]{ck14}. Both $\beta$ and 
$\varepsilon_1$ are smooth functions of $\rho_1$,

In the following, $\rho_{m}$ and $v_m$ refer, respectively, to the
density and second component of the velocity for the intermediate
state for the one-dimensional classical 2-shock solution to the
Riemann initial value isentropic problem described in Section \ref{rivprob}. We have (see \cite[Lemma
4.5 and p.~1042]{ck14}):

\begin{equation}
\label{bem}
\beta^{2}(\rho_{m}) =v^{2}_{m},\qquad \varepsilon_{1}(\rho_{m})=0,
\end{equation}
and $\varepsilon_2>0$ on some
interval \begin{equation}\label{Isdef}
  I^\ast=[\rho^\ast,\rho_m],\end{equation}
where $\rho^\ast$ depends on the Riemann data. We also note that the
2-shock propagates with speeds $\nu_{\pm}(\rho_m)$; see the discussion
below (5.10) in \cite{ck14}. 

\subsection{Summary}

Given the appropriate Riemann initial data, Chiodaroli and Kreml
construct admissible fan sub-solutions parameterised by $\rho_1$ and
$\varepsilon_2$. The domain of admissibility of fan sub-solutions is a
nonempty domain $G$ in the $(\rho_1, \varepsilon_2)$ plane such
that $G \subset (\rho_+, \rho_m) \times \R_+$. To each point in
$G$ corresponds an uncountable set of convex integration (wild)
solutions, which are physically indistinguishable: they all satisfy
the same Rankine-Hugoniot conditions and have the same rate of energy
dissipation.

Following \cite{ck14} we denote by
$\nu_{-}(\rho_{1}), \nu_{+}(\rho_{1})$ the shock speeds in the
$x_2$ direction of all wild solutions corresponding to a particular
admissible choice of $\rho_1$. Explicit expressions are presented in
\cite[(4.46),(4.47)]{ck14}.

The following sections apply the admissibility criterion LAAP$_{0}$, formulated in Section \ref{laap},  to the two-dimensional compressible isentropic Euler equations in the class $S$ consisting of the classical 2-shock solution, the convex integration solutions of \cite{ck14} and the hybrid solutions of \cite{mp25}.

\section{Justification for LAAP$_{0}$}\label{jus}

%In summary, Chiodaroli and Kreml \cite{ck14} treat the two-dimensional isentropic compressible Euler equation and for specific Riemann
%initial data prove that there are an infinite number of entropic weak
%convex integration solutions. 
For some sub-solutions, the wild
solutions constructed in \cite{ck14} dissipate energy at $t=0^{+}$
more rapidly than the one-dimensional 2-shock solution. As a
result, for some of their sub-solution data  the 
2-shock solution is not selected by the entropy rate criterion.

However, in the same Riemann initial value problem, LAAP prefers the
2-shock solution to the entropic wild solutions constructed in
\cite{ck14}. Nevertheless, Markfelder and Pellhammer \cite{mp25} used
Lemma \ref{corru} to construct solutions which are preferred by LAAP
to the 2-shock solution. {To this end, they restrict
  the two dimensional component of the fan sub-solution to a compact
  time interval.} Consequently, for time $T_{0}$ greater than the
upper limit of this compact time interval, a new problem may be
constructed with piece-wise constant initial data determined by the
fan sub-solution. The classical method of elementary waves is then
used to extend locally in time the solution obtained in
\cite{ck14}. Hence Markfelder and Pellhammer \cite{mp25} obtain hybrid
solutions whose action for sufficiently large finite times is smaller
than that of the 2-shock solution. It is worth remarking that the
hybrid solution constructed by Markfelder and Pellhammer cannot be
trivially extended to a global in time solution as the initial data at
$t=T_{0}$ do not necessarily satisfy the small bounded variation
hypothesis of Glimm's global existence theorem \cite{g65}.

These developments motivate a new admissibility criterion to replace 
the one formulated in \cite{ggks25}. The new feature is that the
actions are defined with variable terminal time $t_{1}$ that enables
the limit $t_{1}\rightarrow t_{0}^+$ to be taken.

\section{Application of LAAP$_{0}$}\label{appl}

Recall that the Lagrangian $\mathcal{L}$ of fluid motion is the pointwise
difference between the kinetic and potential energy and for the
barotropic compressible fluid is given by
\begin{equation}
\label{Lagr}
\mathcal{L}=\frac{1}{2}\rho\left|\mb{v}\right|^{2} -\rho\varepsilon(\rho),
\end{equation}
in the notation of \eqref{eeineq}.

The corresponding time-dependent action $A(\rho,\,\mb{v})$ in a  domain $\Lambda=\Omega \times [t_{o},\,t]$  is given by
\begin{equation}
%\label{Adef}
\nonumber
A(\rho,\,\mb{v})(t_{0},\,t)= \int^{t}_{t_{0}}\int_{\Omega} \mathcal{L} \,dx \,d\tau,   \qquad t> t_{0}.
\end{equation}

{The convex integration solution involves the constant $\rho_{1}$ introduced in \eqref{eq310} which we now restrict
 to a sufficiently small 
left interval $I$ contained in $I^\ast$ defined in \eqref{Isdef}.} Put
\begin{equation}
\label{Ldiff}
L_{diff}(\rho_1)=\mbox{Lagrangian of 2-shock solution }-\mbox{Lagrangian of convex integration solution},
\end{equation}
and note that the 2-shock solution becomes identical to the convex
integration solution of \cite{ck14} in the region external to
$\Lambda= P_{1}=[-L_{3},\,L_{3}]\times [\ell_{1},\,\ell_{2}]\times
[0,\,t]$, where $2L_{3}$ is defined in Remark \ref{remcorr}$(ii)$, and
\begin{equation}
\label{elldef}
 \ell_{1}< \min{(\nu_{-}(\rho_{1}{)}:\rho_{1}\in I)}t\ { = \nu_-(\rho^\ast) t}, \qquad \ell_{2}> \max{(\nu_{+}(\rho_{1}{)}:\rho_{1}\in I)}t\ { = \nu_+(\rho^\ast) t}.
 \end{equation}
 See Figure \ref{fig1}.  {The identities involving
   min, respectively max, here follow from the fact that $\nu_-$ is
   monotone increasing and $\nu_+$ is monotone decreasing on $I$,
   which in turn follows from equations (4.39)-(4.52) of \cite{ck14}.}

%In consequence, we have

\begin{figure}[!ht]
%  \centerline{\includegraphics[width=0.7\textwidth]{Fig1.pdf}}
\centerline{\includegraphics[width=0.7\textwidth]{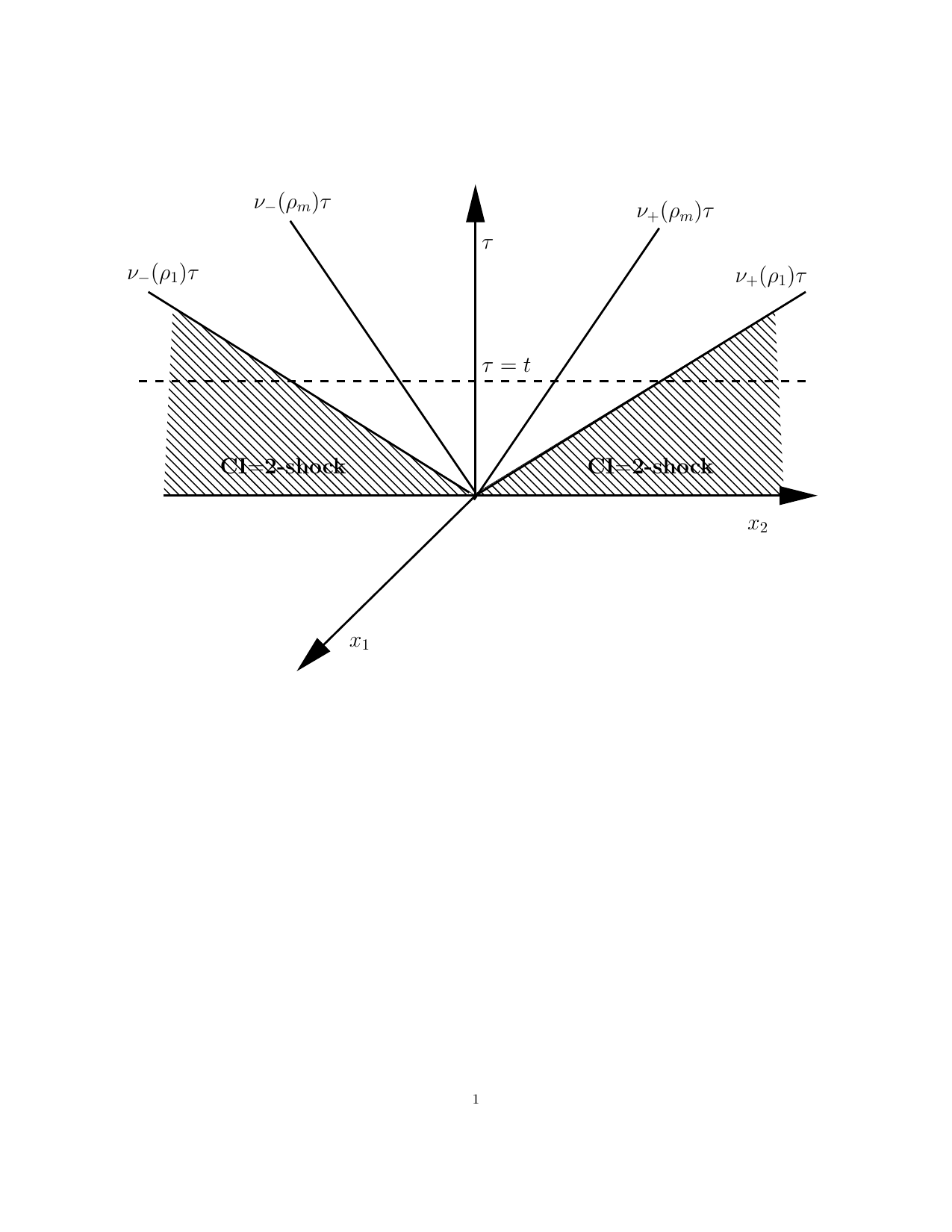}}
\vspace*{-2.5in}
  \caption{{Domains of integration}}
  \label{fig1}
\end{figure}

%\begin{equation}
%\label{Lone}
%L_{diff}(\rho_{1})=\frac{1}{2}\rho_{m}v^{2}_{m}-\rho_{m}\varepsilon(\rho_{m})-\frac{1}{2}\rho_{1}C+\rho_{1}\varepsilon(\rho_{1})
%\end{equation}
%where $C$ is given by \eqref{Ciden}. 
{Set
\[
D(\rho_{1},\,t)=\int^{L_{3}}_{-L_{3}}\int^{t}_{0}\int^{\ell_{2}}_{\ell_{1}} L_{diff}(\rho_{1})\,dx_{2}d\tau dx_{1}.
\]
The function $D(\rho_{1},\,t)$ is continuous with respect to $\rho_{1}\in I$. Choose $\rho_{1}=\rho_{m}$ to obtain 
\[
D(\rho_{m},t) =\int^{L_{3}}_{-L_{3}}\int^{t}_{0}\int^{\nu_{+}(\rho_{m})\tau}_{\nu_{-}(\rho_{m})\tau}L_{diff}(\rho_{m})\,dx_{2}d\tau dx_{1}.
\]
Observe that in the center wedge in Figure \ref{fig1} equation \eqref{Cdef} and \eqref{Lagr} yield
\begin{equation}
\label{Lone}
L_{diff}(\rho_{1})=\frac{1}{2}\rho_{m}v^{2}_{m}-\rho_{m}\varepsilon(\rho_{m})-\frac{1}{2}\rho_{1}C+\rho_{1}\varepsilon(\rho_{1}),
\end{equation}
where $C$ is given by \eqref{Ciden}. Hence using \eqref{Ciden} and \eqref{bem} we see that at $\rho_1=\rho_m$,
%On noting  from \eqref{Lone} that
\[
L_{diff}(\rho_{m})=-\frac{1}{2}\rho_{m}\varepsilon_{2}<0.
\]
We  conclude that $D(\rho_{m},t)<0$ for $t>0$. By continuity of $D(\rho_1,t)$ we have $D(\rho_1,t)<0$ for $t>0$ when $\rho_1$ belongs to a sufficiently small left neighbourhood of $\rho_m$.} Further, because at $t=0$  the domain of integration shown in Figure \ref{fig1}, where  the integrand is non-zero, collapses  to a set of measure zero, we have
\begin{equation}
\label{Dder}
\frac{\partial}{\partial t}D(\rho_{1},t)|_{t=0^{+}}=0. 
\end{equation}
Finally, we note that 
\begin{equation}
\label{Dder2}
\frac{\partial^2}{\partial t^2}D(\rho_{1},t)|_{t=0^{+}}<0
\end{equation}
in a sufficiently small left neighbourhood $I$ of $\rho_{m}$, because
this inequality holds at $\rho_1=\rho_m$ and the second time
derivative of $D(\rho_1,t)$ is continuous in $\rho_1$.

We  now state and prove the main results. 

Consider the Riemann initial value problem for the two-dimensional
isentropic compressible Euler system \eqref{bar1}, \eqref{bar2} and
\eqref{isenic}. Let the set $S$ of solutions consist of the
one-dimensional 2-shock solution, the entropic global convex
integration weak solutions constructed by Chiodaroli and Kreml
\cite{ck14}, and the hybrid solutions constructed by Markfelder and
Pellhammer \cite{mp25} with fan sub-solution data $\rho_1 \in  I^\ast$ and
$\varepsilon_2>0$.  Recall that for small $t>0$ the hybrid solutions
coincide with those of \cite{ck14}. We have

\begin{thm}
  \label{newthm} Given Riemann data satisfying \eqref{exist2sh}, let
  $\varepsilon_{2}>0$ and assume that $\rho_1$ lies in a sufficiently
  small left neighbourhood $I$ of $\rho_m$.  Then the 2-shock solution
  is sARAC-admissible, and therefore is the strictly
  LAAP$_{0}$-admissible solution in $S$.
\end{thm}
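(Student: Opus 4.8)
The plan is to verify the hypotheses of the strict action rate criterion (sARAC) of Definition \ref{rpref}(ii) directly for the 2-shock solution $\mb{u}$, and then invoke Proposition \ref{prefrpref}(2) to upgrade sARAC-admissibility to strict LAAP$_{0}$-admissibility. The first step is to observe that the 2-shock $\mb{u}$ and every competitor $\mb{v}\in S$ coincide outside the localized region $P_{1}$ and are $2L_{3}$-periodic in $x_{1}$, so that the difference of their actions is finite and, per period in $x_{1}$, equals precisely $D(\rho_{1},t)$ assembled in \eqref{Ldiff}--\eqref{Lone}. Consequently the sARAC comparison at order $r$ is governed by $\frac{d^{r}}{dt^{r}}D(\rho_{1},t)|_{t=0^{+}}$, and the criterion is met at the first order $k$ at which this quantity is nonzero, provided it is negative there.

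I would then treat the convex integration competitors. The zeroth derivatives agree because $D(\rho_{1},0)=0$, since both actions vanish at $t_{0}=0$. The crucial structural point is that the region on which the integrand $L_{diff}$ is nonzero is a self-similar wedge that collapses to a set of measure zero as $t\to 0^{+}$; hence the action difference vanishes to second order in $t$, which is exactly the content of \eqref{Dder}, giving equality of the first derivatives as well. This is precisely why the first-order (Dafermos-type) action rate cannot separate the 2-shock from the wild solutions and one is forced to second order. At second order, \eqref{Dder2} supplies $\frac{\partial^{2}}{\partial t^{2}}D(\rho_{1},t)|_{t=0^{+}}<0$ throughout the small left neighbourhood $I$ of $\rho_{m}$, which translates into the strict inequality $\frac{d^{2}}{dt^{2}}A(\mb{u})|_{0^{+}}<\frac{d^{2}}{dt^{2}}A(\mb{v})|_{0^{+}}$. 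Thus $k=2$ realizes sARAC for every convex integration competitor with $\rho_{1}\in I$ and the fixed $\varepsilon_{2}>0$.

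I would dispose of the hybrid competitors using the coincidence recorded just before the theorem: each Markfelder--Pellhammer hybrid solution agrees with the corresponding \cite{ck14} convex integration solution on an initial interval $[0,T_{0})$ with $T_{0}>0$. Since all derivatives in sARAC are evaluated at $t=0^{+}$, which lies strictly inside this interval, every right-derivative at $0^{+}$ of the hybrid solution's action equals that of the matching convex integration solution, so the identical $k=2$ comparison applies verbatim. Collecting the two cases shows the 2-shock is strictly preferred to every $\mb{v}\in S$, i.e. sARAC-admissible, and Proposition \ref{prefrpref}(2) then yields strict LAAP$_{0}$-admissibility.

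The step I expect to carry the real content is the passage to second order: one must ensure that the first-order action rates genuinely coincide for \emph{every} competitor (not merely at $\rho_{1}=\rho_{m}$) and that the second-order rate is strictly signed on the whole neighbourhood $I$. Both rest on the facts \eqref{Dder} and \eqref{Dder2}, whose verification in turn depends on the self-similar wedge geometry making $D(\rho_{1},t)$ vanish to second order in $t$, on the explicit value $L_{diff}(\rho_{m})=-\tfrac{1}{2}\rho_{m}\varepsilon_{2}<0$ obtained from \eqref{Ciden} and \eqref{bem}, and on the continuity of $\rho_{1}\mapsto\frac{\partial^{2}}{\partial t^{2}}D(\rho_{1},t)|_{0^{+}}$ used to propagate the strict sign from $\rho_{m}$ to a neighbourhood. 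The only remaining thing to check is that the order $k$ and the neighbourhood $I$ can be taken uniform over the comparison set, which they can, since $k=2$ works for all competitors and $I$ depends only on the fixed $\varepsilon_{2}$.
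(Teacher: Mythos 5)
Your proposal is correct and follows essentially the same route as the paper: it reduces the sARAC comparison to the sign and vanishing order of $D(\rho_1,t)$ at $t=0^+$ via \eqref{Dder} and \eqref{Dder2} (with $k=2$), handles the hybrid solutions by their coincidence with the convex integration solutions for small time, and invokes Proposition \ref{prefrpref}(2) to pass to strict LAAP$_0$-admissibility. The only difference is that you spell out explicitly the computations that the paper's proof merely references as ``just shown.''
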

\begin{proof}
  The proof follows by combining the proof of \cite[Thm.~5]{ggks25}
  with Definitions \ref{pref} and \ref{rpref}.

As just shown, for $\rho_1$ in the interval $I$ the action and its first time derivative are less than or equal and the second time derivative strictly less for the 2-shock solution
than for any other solution in $S$, including those obtained in
\cite{mp25} since for small enough time these coincide with those of
\cite{ck14}. Hence, both sARAC and strict LAAP$_{0}$ exclude the
admissibility of the convex integration and hybrid solutions and
establish the strict admissibility of the 2-shock solution in $S$. \end{proof}

The next result is a global version of Theorem \ref{newthm}
corresponding to \cite[Thm.~6]{ggks25}. It assumes the pressure law
$p(\rho)=\rho^{\gamma}$ with $\gamma=2$ so that the internal energy is
$\varepsilon=\rho$. Numerical evidence indicates that an analogous
result holds for all $\gamma\in [1,\,3].$

\begin{thm}\label{gnewthm}
  Let $\gamma=2.$ For any Riemann data satisfying \eqref{exist2sh} and
  any $\varepsilon_{2}>0$ in \eqref{Ciden}, the classical 2-shock
  solution is $sARAC$- and strictly LAAP$_{0}$-admissible in $S$.
\end{thm}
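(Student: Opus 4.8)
The plan is to follow the proof of Theorem \ref{newthm} but to replace the continuity-in-$\rho_1$ argument near $\rho_m$ by a direct verification valid for every admissible parameter. By Proposition \ref{prefrpref}(2) it is enough to establish the sARAC inequality with $k=2$ for the 2-shock against every convex integration solution (and hence, since these coincide with the hybrid solutions for small $t$, against every solution in $S$). Writing $D(\rho_1,t)=A(\text{2-shock})-A(\text{convex integration})$ as in Section~\ref{appl}, I would show that $\partial_t D(\rho_1,t)|_{t=0^+}=0$ and $\partial^2_{tt}D(\rho_1,t)|_{t=0^+}<0$ for every admissible $\rho_1\in I^\ast$ and every $\varepsilon_2>0$, and for each Riemann datum meeting \eqref{exist2sh}. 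Since for fixed $(\rho_1,\varepsilon_2)$ all corrugations of Lemma \ref{corru} produce the same constant $C$ and hence the same $D$, this covers the whole uncountable family at once.

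First I would use self-similarity to collapse the statement to a single algebraic inequality. Both solutions are piecewise constant on wedges whose interfaces propagate as $\nu_\pm(\rho_m)\tau$ and $\nu_\pm(\rho_1)\tau$, and by the trace identity \eqref{Cdef} the kinetic energy density of the convex integration middle state equals the constant $\tfrac12\rho_1 C$ almost everywhere, so $L_{diff}$ is independent of $x_1$. Hence the inner integral $G(\tau)=\int_{\ell_1}^{\ell_2}L_{diff}\,dx_2$ is exactly linear in $\tau$, say $G(\tau)=g(\rho_1)\tau$, which gives $D(\rho_1,t)=L_3\,g(\rho_1)\,t^2$. This makes \eqref{Dder} automatic and identifies $\tfrac12\partial^2_{tt}D|_{t=0^+}=L_3\,g(\rho_1)$, so that \eqref{Dder2}, and with it the theorem, is equivalent to $g(\rho_1)<0$ on $I^\ast$. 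Because $\rho_1<\rho_m$ forces the 2-shock wedge $[\nu_-(\rho_m),\nu_+(\rho_m)]$ to sit inside the wider wedge $[\nu_-(\rho_1),\nu_+(\rho_1)]$ (monotonicity of $\nu_\pm$, cf. \eqref{elldef}), I would split $g$ into the common central wedge, where the integrand is \eqref{Lone}, plus the two overshoot strips in which the 2-shock is still in an outer state while the convex integration solution has already switched to its middle state. Collecting terms yields $g(\rho_1)=a_{2s}(\rho_1)-\big(\tfrac12\rho_1 C-\rho_1\varepsilon(\rho_1)\big)\big(\nu_+(\rho_1)-\nu_-(\rho_1)\big)$, where $a_{2s}$ is the $\tau$-normalised 2-shock Lagrangian integrated across the convex integration wedge.

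Next I would isolate the $\varepsilon_2$-dependence. Since $\varepsilon_2$ enters only through $C=\beta^2(\rho_1)+\varepsilon_1(\rho_1)+\varepsilon_2$ in the subtracted term, one has $g(\rho_1,\varepsilon_2)=g(\rho_1,0)-\tfrac12\rho_1\big(\nu_+(\rho_1)-\nu_-(\rho_1)\big)\varepsilon_2$, which is strictly decreasing in $\varepsilon_2$. It therefore suffices to prove $g(\rho_1,0)\le 0$ for all $\rho_1\in I^\ast$, the strict inequality then being supplied by the $\varepsilon_2>0$ term; this is also consistent with the local picture, since $g(\rho_m,0)=0$ by \eqref{bem} while $g(\rho_m,\varepsilon_2)=-\tfrac12\rho_m\varepsilon_2\big(\nu_+(\rho_m)-\nu_-(\rho_m)\big)<0$. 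Setting $\gamma=2$, so that $\varepsilon(\rho)=\rho$ and $p(\rho)=\rho^2$, I would then substitute the closed-form expressions from \cite{ck14} for $\nu_\pm$, $\beta$, $\varepsilon_1$ and the Rankine--Hugoniot values $\rho_m,v_m$. The quadratic pressure law renders every one of these a low-degree algebraic function of $\rho_1$ and the Riemann data, so that $g(\rho_1,0)$ becomes an explicit rational function whose numerator is a polynomial in $\rho_1$.

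The main obstacle is this final step: showing that the numerator of $g(\rho_1,0)$ has the correct sign throughout $I^\ast$ for each admissible Riemann datum. Near $\rho_m$ this is the easy regime already handled by Theorem \ref{newthm}, but away from $\rho_m$ the two overshoot strips contribute with a priori indefinite sign, and one must show that the Lagrangian deficit accumulated in the central wedge dominates them. It is precisely the reduction to a polynomial inequality that makes $\gamma=2$ tractable in closed form, since one can factor out the vanishing of $g(\cdot,0)$ at $\rho_m$ and check the sign of the remaining factor on $[\rho^\ast,\rho_m]$ directly; for general $\gamma\in[1,3]$ the non-integer exponents appearing in $\nu_\pm$, $\beta$ and $\varepsilon_1$ obstruct such a factorisation and leave only the numerical evidence mentioned before the statement.
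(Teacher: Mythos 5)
Your structural reduction is sound and in fact makes explicit what the paper leaves implicit: by self-similarity and the a.e.-constancy of the convex-integration Lagrangian in $P_1$ (via \eqref{Cdef}), the difference of actions is exactly $D(\rho_1,t)=L_3\,g(\rho_1,\varepsilon_2)\,t^2$, so the global LAAP comparison, LAAP$_0$, and sARAC with $k=2$ all collapse to the single algebraic inequality $g(\rho_1,\varepsilon_2)<0$. The decomposition into the central wedge plus the two overshoot strips, and the observation that $g$ is affine and strictly decreasing in $\varepsilon_2$ with slope $-\tfrac12\rho_1(\nu_+(\rho_1)-\nu_-(\rho_1))$, are both correct, as is the remark that all corrugations for fixed $(\rho_1,\varepsilon_2)$ give the same $C$ and hence the same $D$. (One minor caveat: the paper asserts the monotonicity of $\nu_\pm$, and hence the nesting of the wedges, only on the small left neighbourhood $I$, citing (4.39)--(4.52) of \cite{ck14}; for your global argument you need this, or an adapted decomposition, on all of $I^\ast$.)

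The genuine gap is that you stop exactly where the content of the theorem lies. Everything up to the reduction is already contained in Theorem \ref{newthm}; what distinguishes the present statement is the claim that $g(\rho_1,0)\le 0$ holds on the \emph{entire} admissible range of $\rho_1$, not merely near $\rho_m$ where \eqref{bem} makes it a perturbation argument. You describe a strategy (substitute the $\gamma=2$ closed forms, reduce to a polynomial, factor out the root at $\rho_m$, check the sign of the remaining factor) and yourself label it ``the main obstacle,'' but you do not carry it out, and it is precisely in the overshoot strips, whose contribution you note has a priori indefinite sign, that the inequality could fail. The paper does not perform this computation either: its one-line proof imports the global action inequality from \cite[Thm.~6]{ggks25}, which for $\gamma=2$ establishes exactly the statement $D(\rho_1,t)<0$ that your identity $D=L_3\,g\,t^2$ shows is equivalent to $g<0$. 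So either cite that result to close the argument, as the paper does, or actually execute the polynomial sign check; as written, the decisive inequality is asserted rather than proved.
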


\begin{proof}
The proof is a straightforward adaptation of \cite[Thm.~6]{ggks25} and  Theorem \ref{newthm}.
\end{proof}

\section*{Acknowledgements}

The authors are grateful for support from the ICMS Research in Groups
scheme. They also thank C.~M.~Dafermos, B.~Gebhard and O.~Kreml for remarks {and the referees for their comments and corrections to an earlier version of this paper}.

%\section*{Data availability statement}
%The authors declare that the data supporting the findings of this study are available within the paper.

\end{document}